\newtheorem{theorem}{Theorem}[section]
\newtheorem{lemma}[theorem]{Lemma}
\theoremstyle{definition}
\newtheorem{conjecture}[theorem]{Conjecture}
\theoremstyle{remark}
\newtheorem{remark}[theorem]{Remark}
\numberwithin{equation}{section}
\newcommand{\QQ}{\mathbb{Q}}
\newcommand{\PP}{\mathbb{P}}
\newcommand{\FF}{\mathbb{F}}
\newcommand{\ZZ}{\mathbb{Z}}
\newcommand{\ord}{\mathrm{ord}}
\newcommand{\gp}{\mathfrak{p}}
\newcommand{\M}[1]{M[#1]}
\begin{document}
	
	\title{Explicit supersingular cyclic curves}
	
	\author{Marco Streng}
	\address{Universiteit Leiden}
	\curraddr{}
	\email{streng@math.leidenuniv.nl}
	\thanks{The author thanks Edgar Costa, Jeroen Sijsling, and Anna Somoza for helpful discussions.}

	\subjclass[2020]{Primary 14G10, 11G20, 11M38, 14H10, 14K22; Secondary: 11G18, 14G10, 14G15, 14H30, 14H40}
	
	\keywords{}
	
	\date{}
	
	\dedicatory{}
	
	\begin{abstract}
Li, Mantovan, Pries, and Tang proved the existence of
supersingular curves over $\overline{\FF_p}$ in each of the
special families of curves in Moonen's
classification.
Their proof does not provide defining equations of these curves.
We make some of their results explicit using the reductions
modulo~$p$
of previously computed curves with complex multiplication.
\end{abstract}
	
	\maketitle

	\section{Introduction}

	Moonen~\cite[Table~1]{moonen-special} showed that there are exactly $20$ families
	of cyclic covers of $\PP^1$ that give rise to a special subvariety of the moduli space
	of principally polarised abelian varieties.
	Denoting these families by $\M{1}$--$\M{21}$,
	more recent work of Li, Mantovan, Pries, and Tang
	considers five of these families 
	and shows that they contain supersingular curves as follows
	(and as left open in \cite[Theorem~1.1]{lmpt19}).
	\begin{theorem}[{\cite[Theorem~7.1]{li-mantovan-pries-tang}}]\label{thm:one}
		In each of the following families
		there exists a supersingular smooth curve of genus $g$ defined
		over $\overline{\FF_p}$
		for all sufficiently large primes~$p$
		that satisfy the given condition:
		\begin{enumerate}[1.]
			\item in  $\M{6}$ with $g=3$, assuming $p\equiv 2\pmod{3}$;
			\item in  $\M{8}$ with $g=3$, assuming $p\equiv 3\pmod{4}$;
			\item in  $\M{10}$ with $g=4$, assuming $p\equiv 2\pmod{3}$;
			\item in  $\M{14}$ with $g=4$, assuming $p\equiv 5\pmod{6}$;  and
			\item in  $\M{16}$ with $g=6$, assuming $p\equiv 2$, $3$ or $4 \pmod{5}$.
		\end{enumerate}
	\end{theorem}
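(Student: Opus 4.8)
The plan is to bypass the Shimura-variety and Newton-stratification machinery of \cite{li-mantovan-pries-tang} and instead exhibit, for each of the five families, a single explicit curve with complex multiplication whose reduction is supersingular at every prime satisfying the stated congruence. The guiding principle is that if $C$ is a smooth curve over a number field whose Jacobian $J$ has complex multiplication, then the Newton polygon of $J$ modulo a prime $\gp$ of good, unramified reduction is determined by the CM type via the Shimura--Taniyama formula, and in particular each isocrystal factor is isoclinic; supersingularity is then equivalent to all of these slopes being $1/2$. Using a genuine CM member (rather than a typical member of the family) is exactly what upgrades an ``average slope $1/2$'' statement into the isoclinic conclusion needed for supersingularity.

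Each family $M(i)$ in \cite{moonen-special} consists of cyclic covers $y^{n}=\prod_j (x-t_j)^{a_j}$ of $\PP^1$, and the automorphism $y\mapsto \zeta_n y$ endows $J$ with an action of $\ZZ[\zeta_n]$. I would first read off from Moonen's table the datum $(n;a_1,\dots)$, giving $n=3,4,3,6,5$ for $M(6),M(8),M(10),M(14),M(16)$ respectively, together with the signature $(d_k)_{k\in\ZZ/n}$ recording the dimensions of the $\zeta_n^{k}$-eigenspaces of $H^0(C,\Omega^1)$, computed by the Chevalley--Weil (Hurwitz) formula. I would then select from the existing tables of CM curves one curve $C_i$ realizing this datum, whose Jacobian has complex multiplication by a CM field $K_i\supseteq\QQ(\zeta_n)$ of degree $2g$, and confirm by inspecting its equation that $C_i$ indeed lies in $M(i)$.

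The uniform mechanism is the following. The crystalline $H^1$ of the reduction decomposes according to the orbits of multiplication by $p$ on $(\ZZ/n)\setminus\{0\}$, each of size $f=\ord_n(p)$, and complex conjugation acts as $k\mapsto -k$. By the Shimura--Taniyama formula the isocrystal attached to an orbit $\mathcal{O}$ is isoclinic of slope
\[
\lambda_{\mathcal{O}} \;=\; \frac{\sum_{k\in\mathcal{O}} d_k}{\sum_{k\in\mathcal{O}}\bigl(d_k+d_{-k}\bigr)} .
\]
Now each of the five congruence conditions is equivalent to $p^{f/2}\equiv -1\pmod{n}$: for instance $p\equiv 2\pmod 3$ gives $f=2$ and $p\equiv -1$, while $p\equiv 2,3,4\pmod 5$ are precisely the classes with $p^{f/2}\equiv -1\pmod 5$, the class $p\equiv 1$ being excluded because there $f$ is odd. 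When $p^{f/2}\equiv -1$, every orbit $\mathcal{O}$ satisfies $-\mathcal{O}=\mathcal{O}$, so reindexing gives $\sum_{k\in\mathcal{O}} d_k=\sum_{k\in\mathcal{O}} d_{-k}$ and hence $\lambda_{\mathcal{O}}=\tfrac12$. Thus all slopes equal $1/2$ and $J\bmod\gp$ is supersingular, independently of the precise signature; this is what lets a single curve cover the entire congruence class.

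The main obstacle is ensuring that it is the \emph{full} Jacobian, and not merely one isogeny factor, that becomes supersingular: in these small genera the cyclic cover may have a Jacobian isogenous to a product, or CM by a field strictly larger than $\QQ(\zeta_n)$, so one must track the isocrystal over every orbit and confirm that isocliny holds on each simple factor, which is where the CM hypothesis (rather than mere $\ZZ[\zeta_n]$-action) is essential. One must then establish good reduction and smoothness for all sufficiently large $p$ by discarding the finitely many primes dividing the discriminant of the chosen model or ramifying in $K_i$, and verify that the reduced curve still lies in $M(i)$ over $\overline{\FF_p}$. I expect the delicate points to be the correct matching of the explicit model's signature to the isoclinic CM type and the verification of supersingularity of the entire Jacobian; the congruence bookkeeping above is then routine.
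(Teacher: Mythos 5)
Your overall strategy --- exhibit an explicit CM member of each family and deduce supersingularity from the Shimura--Taniyama formula once complex conjugation lies in the decomposition group --- is exactly the route the paper takes for $M(6)$, $M(8)$ and $M(16)$ (Lemma~\ref{lem:insteadofkllrss} together with Theorems~\ref{thm:M6}, \ref{thm:M8}, \ref{thm:M16}). But there are two genuine gaps. First, the step ``select from the existing tables of CM curves one curve $C_i$ realizing this datum'' is not available for all five families: for the genus-$4$ families $M(10)$ and $M(14)$ no explicit CM curves (nor the period-matrix reconstruction algorithms needed to produce them) exist in the literature, and for $M(16)$ the only candidate, Somoza's curve \eqref{eq:somozacurve}, has the asserted CM only conjecturally (Conjecture~\ref{conj:somozacurve}). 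This is precisely why the paper proves an explicit statement only for three of the five cases, one of them conditionally, and why Li--Mantovan--Pries--Tang argue non-constructively. Your proposal therefore cannot be completed, as written, into a proof of all five items of Theorem~\ref{thm:one}.

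Second, your ``uniform mechanism'' operates at the wrong level. The pieces of crystalline $H^1$ indexed by orbits of multiplication by $p$ on $(\ZZ/n)\setminus\{0\}$ are cut out by the $\ZZ[\zeta_n]$-action alone, and the Shimura--Taniyama formula does \emph{not} make them isoclinic: it gives isocliny only for the finer pieces attached to the primes of the full degree-$2g$ CM field $K_i\supseteq\QQ(\zeta_n)$, so your $\lambda_{\mathcal{O}}$ is merely the average slope of a coarse piece. The congruence $p^{f/2}\equiv -1\pmod{n}$ shows that complex conjugation restricted to $\QQ(\zeta_n)$ is a power of Frobenius there, but what is actually needed is that complex conjugation of $K_i$ itself lies in the decomposition group of $p$ in $K_i$; this implication can fail for a general CM field containing $\QQ(\zeta_n)$ (for instance $K_i=\QQ(\zeta_n)\cdot F$ with $F$ totally real and $\mathrm{Frob}_p$ acting nontrivially on $F$ while having order $2$ on $\QQ(\zeta_n)$: then $\langle\mathrm{Frob}_p\rangle$ need not contain complex conjugation). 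The paper closes this gap by insisting that $K_i$ be \emph{cyclic} of degree $2g$, where the unique element of order $2$ is complex conjugation, so even order of $\mathrm{Frob}_p$ suffices (Lemma~\ref{lem:insteadofkllrss}); concretely, for $M(6)$ the relevant computation takes place in $(\ZZ/9\ZZ)^\times$ for $K=\QQ(\zeta_9)$, not in $(\ZZ/3\ZZ)^\times$. You flag isocliny on each simple factor as a ``delicate point,'' but the mechanism you display does not resolve it, and resolving it constrains which CM curves and fields you are allowed to choose.
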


By taking reductions modulo primes of known complex multiplication
curves, we prove the following more explicit version of cases 1, 2, and 5.
\begin{theorem}\label{thm:two}
	In each of the following families there exists a supersingular smooth
	curve of genus $g$ defined over $\FF_p$ for all primes $p$ that satisfy the given condition:
	\begin{enumerate}[1.]
		\item in  $\M{6}$ with $g=3$, assuming $p\equiv 2\pmod{3}$;
		\item in  $\M{8}$ with $g=3$, assuming $p\equiv 3\pmod{4}$;
		and
		\addtocounter{enumi}{1}
		\addtocounter{enumi}{1}
		\item  in  $\M{16}$ with $g=6$, assuming $p\equiv 2$, $3$ or $4 \pmod{5}$ and Conjecture \ref{conj:somozacurve}.
	\end{enumerate}
The curves are given explicitly in Theorems \ref{thm:M6}, \ref{thm:M8},
and \ref{thm:M16} respectively.
\end{theorem}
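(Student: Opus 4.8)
The plan is to exhibit, for each of the three families, one explicit curve $C$ over $\QQ$ whose Jacobian has complex multiplication and whose bad reduction avoids the stated congruence class, and then to show that its reduction modulo $p$ is a smooth supersingular curve in the desired family. The defining equations are the content of Theorems~\ref{thm:M6}, \ref{thm:M8} and~\ref{thm:M16}; each $C$ is a cyclic cover $y^n = f(x)$ of $\PP^1$ of the combinatorial type defining $M(6)$, $M(8)$, $M(16)$, with $n = 3$, $4$, $5$ respectively. Since $n$ is prime to every admissible $p$, reduction at a prime of good reduction again yields a smooth cyclic cover of the same type, and hence a curve in the same family, so the whole problem reduces to two points: good reduction of $C$ at $p$, and supersingularity of the reduced Jacobian.

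For supersingularity I would apply the standard criterion for reduction of CM abelian varieties. Let $E$ be the CM field acting on $\mathrm{Jac}(C)$, with $E_0$ its maximal totally real subfield and complex conjugation $c \in \mathrm{Gal}(E/E_0)$, and suppose good reduction at $\gp \mid p$. Then the reduction is supersingular if and only if every prime of $E$ above $p$ is fixed by $c$, equivalently no prime of $E_0$ above $p$ splits in $E/E_0$; under this condition the Shimura–Taniyama formula yields Newton slope $1/2$ at each place above $p$, which is supersingularity. Each $E$ here is abelian and contains $\QQ(\zeta_n)$, on which $c$ acts by $\zeta_n \mapsto \zeta_n^{-1}$ and the Frobenius at $p$ acts by $\zeta_n \mapsto \zeta_n^p$; a direct Galois computation in $E$ then identifies the fixed-prime condition with $p^m \equiv -1 \pmod n$ for some $m$. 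These are exactly the stated congruences — $p \equiv 2 \pmod 3$ for $n=3$, $p \equiv 3 \pmod 4$ for $n=4$, and $p \equiv 2,3,4 \pmod 5$ for $n=5$ — since these are precisely the residues some power of which is $-1$ modulo $n$.

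With this criterion in hand, the cases differ only in the availability of the CM curve. For $M(6)$ and $M(8)$, genus-$3$ curves with the required complex multiplication have already been computed, so one writes down an explicit model, reads the primes of bad reduction off the discriminant, confirms the CM action, and applies the criterion to obtain an unconditional statement. For $M(16)$ the genus is $6$ and no suitable curve is available in the literature; one instead invokes the model predicted by Conjecture~\ref{conj:somozacurve}, expected to have CM by a degree-$12$ field containing $\QQ(\zeta_5)$. Granting the conjecture supplies both the equation and its CM, after which the reduction argument is identical — which is exactly why case~3 is conditional while cases~1 and~2 are not.

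The hard part is the genus-$6$ case. In genus $3$ the CM curves and their reductions are explicit and fully checkable, but constructing an explicit genus-$6$ curve with CM by a degree-$12$ field containing $\QQ(\zeta_5)$ and verifying its complex multiplication is genuinely difficult, and it is here that one is forced to rely on Conjecture~\ref{conj:somozacurve}. A secondary point, to be confirmed in every case, is that the reduction really lies in the prescribed family rather than merely being some supersingular curve of the right genus; as noted this follows because the model is a cyclic cover of the prescribed type with $p \nmid n$, but one must still check that the reduction is smooth, equivalently that $f(x) \bmod p$ retains the correct factorization type, at the relevant primes.
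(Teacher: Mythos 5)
Your overall strategy is the paper's: pick an explicit CM curve in each family, reduce it modulo a prime of good reduction, and deduce supersingularity from the Shimura--Taniyama formula by showing complex conjugation lies in the decomposition group at $p$, so that every Newton slope is $1/2$. That is exactly the content of the paper's Lemma~\ref{lem:insteadofkllrss} (stated there for cyclic CM fields via ``$\mathrm{Frob}_p$ has even order,'' which for a cyclic group is equivalent to your ``every prime above $p$ is fixed by $c$''), and the three curves are the ones in Theorems~\ref{thm:M6}, \ref{thm:M8}, \ref{thm:M16}, with CM fields $\QQ(\zeta_9)$, $\QQ(\zeta_9)^+(i)$, and $\QQ(\zeta_9)^+(\zeta_5)$.

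There is, however, a genuine gap: your plan to use ``one explicit curve $C$ over $\QQ$ whose bad reduction avoids the stated congruence class'' cannot be carried out for $M(8)$ and $M(16)$ at $p=3$, and the theorem explicitly claims \emph{all} primes in the congruence class (removing the ``sufficiently large $p$'' hypothesis is one of the advertised improvements over Li--Mantovan--Pries--Tang). The prime $3$ satisfies $3\equiv 3\pmod 4$ and $3\equiv 3\pmod 5$, the exhibited models degenerate there (e.g.\ $x^7+6x^5+9x^3+x\equiv x(x^2+1)^3$ and $x^4-24x^3+3x^2+x\equiv x(x+1)^3$ modulo $3$), and more fundamentally $3$ ramifies in $\QQ(\zeta_9)^+$, so the unramifiedness hypothesis of the CM criterion fails at $3$ no matter which model you choose. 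The paper closes this case by exhibiting a \emph{second} curve over $\FF_3$ in each of these two families and verifying supersingularity by direct point counting; your proposal needs the analogous supplement. Two smaller imprecisions: the supersingularity criterion is only needed (and only safely true) as an implication, not an ``if and only if,'' since for a prime not fixed by $c$ the slopes depend on the CM type; and the equivalence of ``$c\in\langle\mathrm{Frob}_p\rangle$ in $\mathrm{Gal}(E/\QQ)$'' with the congruence $p^m\equiv -1\pmod n$ is not a formal consequence of $E\supseteq\QQ(\zeta_n)$ --- it must be checked using the actual (cyclic) Galois groups of the three fields, as the paper does when it notes that $(p\bmod 9)$ has even order in $(\ZZ/9\ZZ)^\times$ exactly when $p\equiv 2\pmod 3$.
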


The improvement that Theorem~\ref{thm:two} provides over Theorem~\ref{thm:one} is as follows:
\begin{enumerate}[(a)]
	\item we remove the condition that $p$ be sufficiently large,
	\item we give explicit equations of the curves,
	\item we give curves that are defined not only over
	$\overline{\FF_p}$, but over~$\FF_p$.
\end{enumerate}

\section{\texorpdfstring{Supersingular Picard curves in $\M{6}$}{Supersingular Picard curves in \M{6}}}

The family $\M{6}$ consists of the smooth algebraic
curves that have a model of the form $y^3 = f(x)$
for a separable polynomial $f$ of degree~$4$.
This is a special family of smooth plane quartic curves
known as Picard curves.
It features in the LMFDB~\cite{lmfdb}
as \href{https://www.lmfdb.org/HigherGenus/C/Aut/3.3-1.0.3-3-3-3-3.1}{3.3-1.0.3-3-3-3-3.1}
and
\href{https://www.lmfdb.org/HigherGenus/C/Aut/3.3-1.0.3-3-3-3-3.2}{3.3-1.0.3-3-3-3-3.2}.

Koike-Weng~\cite{koike-weng} and Lario-Somoza-Vincent~\cite{LarioSomoza}
computed many
examples of complex multiplication (CM) curves in this family.
However, in order to prove Theorem~\ref{thm:one}.1
we can make do with one curve $$ C : y^3 = x^4 - x.$$
Note that it has an automorphism
$[\zeta_9] : (x,y)\mapsto (\zeta_9^3 x, \zeta_9 y)$
over $K = \QQ(\zeta_9)$.
This induces an automorphism of the Jacobian $J = \mathrm{Jac}(C)$ over $K$
and in fact an embedding $\ZZ[\zeta_9] \rightarrow \mathrm{End}(J_K)$,
proving that this curve indeed has CM.

For curves of genus $3$ with CM by a cyclic sextic field, Proposition~4.1
of K{\i}l{\i}{\c{c}}er-Labrande-Lercier-Ritzenthaler-Sijsling-Streng~\cite{ateam} tells us exactly when the reduction modulo $p$
is supersingular.
We get the following result.
\begin{theorem}\label{thm:M6}
	For all prime numbers $p\equiv 2\pmod{3}$
	the plane projective curve over $\FF_p$ given by
	$$y^3 = x^4 - x$$
	is smooth and supersingular in the family $\M{6}$.
\end{theorem}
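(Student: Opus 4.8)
The plan is to separate the statement into its three assertions---smoothness, membership in $M(6)$, and supersingularity---and to settle the first two by a direct separability computation, while reducing the third to Proposition~4.1 of~\cite{ateam} through the CM structure over $K = \QQ(\zeta_9)$ recalled above.

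For smoothness and membership in $M(6)$ I would argue as follows. Factor $f(x) = x^4 - x = x(x-1)(x^2 + x + 1)$, so that the roots of $f$ are $0$, $1$, and the two primitive cube roots of unity. These four elements of $\overline{\FF_p}$ are pairwise distinct unless the cube roots of unity collapse, which happens precisely when $p = 3$. Since $p \equiv 2 \pmod 3$ rules out $p = 3$, the polynomial $f$ is separable of degree $4$ over $\FF_p$; as its leading coefficient is a unit and $p \nmid 3$, the smooth projective model of $y^3 = f(x)$ is a smooth curve of genus $(3-1)(4-1)/2 = 3$, hence a Picard curve in the family $M(6)$.

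For supersingularity I would exploit the embedding $\ZZ[\zeta_9] \to \mathrm{End}(J_K)$ with $J = \mathrm{Jac}(C)$ and $K = \QQ(\zeta_9)$ a cyclic sextic CM field. The same separability input shows that the given model has good reduction at every prime $\gp \mid p$ of $K$ for $p \ne 3$, so the reduction $\bar J$ is an abelian threefold with CM by $\ZZ[\zeta_9]$ over a finite field, and Proposition~4.1 of~\cite{ateam} applies. I would then make its criterion explicit: identifying the embeddings of $K$ with $(\ZZ/9\ZZ)^{\times}$ and complex conjugation with $a \mapsto -a$, and noting that the Frobenius at $p$ acts as multiplication by $p$, the Newton slopes are all $1/2$---that is, $\bar J$ is supersingular---exactly when every $\langle p \rangle$-orbit meets the chosen CM type $\Phi$ in half of its points.

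The decisive observation is that this balancing condition holds if and only if $-1 \in \langle p \rangle \subseteq (\ZZ/9\ZZ)^{\times}$, equivalently $p$ is inert in the imaginary quadratic subfield $\QQ(\zeta_3)$, equivalently $p \equiv 2 \pmod 3$: when $-1 \in \langle p \rangle$ each orbit is stable under the fixed-point-free conjugation $a \mapsto -a$, which therefore pairs its elements so that exactly one of each pair lies in $\Phi$, forcing slope $1/2$. Concretely, for $p \equiv 2, 5 \pmod 9$ there is a single orbit of size $6$ meeting $\Phi$ in $3$ points, while for $p \equiv 8 \equiv -1 \pmod 9$ the orbits are exactly the three conjugate pairs. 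I expect the main obstacle to be the faithful translation of Proposition~4.1 into this orbit condition, together with the verification that good reduction---and hence the whole argument---holds for \emph{all} primes $p \equiv 2 \pmod 3$ with no lower bound, including the small primes and the non-inert-but-residue-degree-$2$ case $p \equiv -1 \pmod 9$; the complementary check that $p \equiv 1 \pmod 3$ yields odd-sized orbits, and thus ordinary or otherwise non-supersingular reduction, would confirm that the congruence is sharp.
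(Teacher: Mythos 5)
Your proof is correct and follows essentially the same route as the paper: the paper's official proof cites Proposition~4.1 of \cite{ateam} and translates its splitting criterion into $p\not\equiv 1\pmod 3$, while your unfolding of that criterion via the Shimura--Taniyama slopes and the pairing of each $\langle p\rangle$-orbit under the fixed-point-free conjugation $a\mapsto -a$ is precisely the content of the paper's self-contained Lemma~\ref{lem:insteadofkllrss}, whose hypothesis that $\mathrm{Frob}_p$ have even order is your condition $-1\in\langle p\rangle$. The only difference is that you spell out the separability, smoothness, and genus checks that the paper compresses into the one-line remark that the model has good reduction for $p\neq 3$.
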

\begin{proof}
The given model of $C$ has good reduction modulo every
prime $p\not=3$. Then
Proposition~4.1 of \cite{ateam} shows that
the reduction of $J$ modulo $p$ is supersingular if and only if $p\mathcal{O}_K$ has exactly $1$ or $3$ prime factors.
This is the case exactly when $p\not\equiv 1\pmod{3}$.
\end{proof}
For a self-contained proof
we can use the following lemma
instead of \cite[Proposition~4.1]{ateam}.
Note that $(p\bmod 9)$ has even order in $(\ZZ/9\ZZ)^\times$
exactly if $p\equiv 2\pmod{3}$.

\begin{lemma}\label{lem:insteadofkllrss}
	Let $K$ be a CM field that is cyclic of some degree $2g$ over $\QQ$.
	Let $A$ be an abelian variety of dimension $g$ over a number field $k$
	with endomorphism ring isomorphic to~$\mathcal{O}_K$.
	
	Let $p$ be a prime number that is unramified
	in $K$ and such that $\mathrm{Frob}_p
	\in \mathrm{Gal}(K/\QQ)$ has even order.
	Let $\mathfrak{P}\mid (p)$ be a prime of $k$ of
	good reduction for~$A$.
	
	Then the reduction $(A\bmod \mathfrak{P})$
	is supersingular.
\end{lemma}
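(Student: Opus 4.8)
The plan is to pass to the reduction over a finite field and translate supersingularity into a statement about the $\mathfrak{P}$-adic valuations of Frobenius. First I would note that, by the theory of complex multiplication, the reduction $\bar{A} := (A\bmod\mathfrak{P})$ again carries an action of $\mathcal{O}_K$, now over a finite field $\FF_q$ with $q=p^a$; replacing $\FF_q$ by a finite extension only changes the Frobenius by a power and does not affect supersingularity, which is a property of the Newton polygon over $\overline{\FF_p}$. Because $K$ has degree $2g=2\dim\bar{A}$, it is a maximal commutative subfield of $\mathrm{End}^0(\bar{A})$, and since the Frobenius endomorphism $\pi$ lies in the center of $\mathrm{End}^0(\bar{A})$ it commutes with $K$ and hence lies in $K$. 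Thus $\pi\in\mathcal{O}_K$ is a Weil $q$-number with $\pi\, c(\pi)=q$, where $c$ denotes the complex conjugation on $K$ (the restriction of the Rosati involution).

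Next I would invoke the description of the Newton polygon of a CM abelian variety (Honda--Tate, via the Shimura--Taniyama formula): the slopes of $\bar{A}$ are exactly the numbers $v_{\mathfrak{P}}(\pi)/v_{\mathfrak{P}}(q)$ as $\mathfrak{P}$ ranges over the primes of $K$ above $p$, with $v_{\mathfrak{P}}$ the normalized valuation. Consequently $\bar{A}$ is supersingular precisely when $v_{\mathfrak{P}}(\pi)=\tfrac12 v_{\mathfrak{P}}(q)$ for every $\mathfrak{P}\mid p$. Using $\pi\, c(\pi)=q$ together with the identity $v_{\mathfrak{P}}(c(\pi))=v_{c\mathfrak{P}}(\pi)$, this condition becomes $v_{\mathfrak{P}}(\pi)=v_{c\mathfrak{P}}(\pi)$, which holds whenever $c$ fixes $\mathfrak{P}$, i.e. whenever $c\mathfrak{P}=\mathfrak{P}$.

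The final step is pure group theory, and this is where the hypothesis enters. Since $p$ is unramified and $\mathrm{Gal}(K/\QQ)$ is cyclic, all decomposition groups of primes above $p$ coincide with the cyclic group $\langle\mathrm{Frob}_p\rangle$, so $c$ fixes $\mathfrak{P}$ if and only if $c\in\langle\mathrm{Frob}_p\rangle$. In a cyclic group the complex conjugation $c$ is the unique element of order $2$, and a cyclic subgroup contains this element exactly when its order is even; hence $c\in\langle\mathrm{Frob}_p\rangle$ if and only if $\mathrm{Frob}_p$ has even order, which is our assumption. Therefore $c$ fixes every $\mathfrak{P}\mid p$, all slopes equal $\tfrac12$, and $\bar{A}$ is supersingular.

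I expect the only real subtlety to be the clean invocation of the slope formula for CM abelian varieties, together with the accompanying verifications that $\pi\in K$ and that the CM structure persists under good reduction; once these structural inputs are in place, the argument reduces to the elementary observation that ``$\mathrm{Frob}_p$ has even order'' is the same as ``$c$ lies in the decomposition group,'' which forces the Frobenius valuations at $\mathfrak{P}$ and $c\mathfrak{P}$ to agree and pins every slope to $\tfrac12$.
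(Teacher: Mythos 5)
Your proposal is correct and follows essentially the same route as the paper: Shimura--Taniyama/CM theory puts the Frobenius $\pi$ in $\mathcal{O}_K$, the relation $\pi\,c(\pi)=q$ together with the fact that complex conjugation (the unique order-$2$ element of the cyclic Galois group) lies in the decomposition group forces $\ord_{\mathfrak{P}}(\pi)=\tfrac12\ord_{\mathfrak{P}}(q)$, and hence all Newton slopes equal $\tfrac12$. The only cosmetic difference is that you cite the Honda--Tate slope formula where the paper bounds the valuations of the coefficients of the characteristic polynomial of $\pi$ directly; you are in fact slightly more careful than the paper in working with $q=p^a$ rather than $p$.
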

\begin{proof}
	Without loss of generality (that is, by extending~$k$) all endomorphisms
	of $A$ are defined over~$k$.
	Then the Shimura-Taniyama formula \cite[Theorem 1($\pi2$) in Section 13]{shimura-taniyama}
	tells us that the Frobenius endomorphism of $(A\bmod \mathfrak{P})$ is
	an element $\pi \in \mathcal{O}_K$.
	
	We claim now that for every Galois conjugate $\pi'$ of $\pi$ 
	we have $\ord_{\mathfrak{P}}(\pi') = \frac{1}{2}\ord_{\mathfrak{P}}(p)$.
	Assuming the claim, we get for every $i$ that the coefficient $a_{2g-i}$ of $X^{2g-i}$
	in the characteristic polynomial of $\pi$
	satisfies
	$\ord_{\mathfrak{P}}(a_{2g-j}) \geq i\cdot \frac{1}{2}\ord_{\mathfrak{P}}(p)$.
	The Newton polygon therefore is a straight line segment from $(0,0)$
	to $(2g,g)$ of slope $\frac{1}{2}$, hence $A$ is supersingular.
	
	Now it remains only to prove the claim, which we do as follows.
	By assumption, the decomposition group of $p$ in $K/\QQ$
	contains an element of order $2$ in~$\mathrm{Gal}(K/\QQ)$,
	and since $\mathrm{Gal}(K/\QQ)$ is cylic, the only such element is complex conjugation.
	In particular, we get for every $\alpha\in K$:
	$$\ord_{\gp}(\overline{\alpha}) = \ord_{\overline{\gp}}(\alpha) = \ord_{\gp}(\alpha).$$
	
	We also have $\pi'\overline{\pi'} = p$ for every conjugate $\pi'$ of~$\pi$,
	hence
	$$\ord_{\gp}(p) = \ord_{\gp}(\pi') + \ord_{\gp}(\overline{\pi'})
	= 2\ord_{\gp}(\pi'),$$
	which proves the claim and finishes the proof of the lemma.
\end{proof}

\section{\texorpdfstring{Supersingular hyperelliptic curves in $\M{8}$}{Supersingular hyperelliptic curves in \M{8}}}

The family $\M{8}$ consists of the quadruple covers
of the projective $(U,V)$-line of the form
$$ C : Y^4 = F_2(U,V)\  F_3(U,V)^2,$$
where the $F_i$ are coprime homogeneous
separable polynomials
of degree~$i$.

Suppose that $F_2$ splits. A change
of variables then gives $F_2 = UV$, or in affine
coordinates
$$ C : y^4 = uf(u)^2$$
for a separable polynomial $f$ of degree $3$ with $f(0)\not=0$.

Using a further change of variables puts this curve
in hyperelliptic Weierstrass form.
Indeed, let $x$ be the element $x = y^2 / f(u)$ in the function field $k(C)$ of~$C$.
Then $k(C)$ is generated by $x$ and $y$ because
we have $u = x^2$.
Moreover, these new coordinates satisfy
$$ y^2 = xf(u) = xf(x^2),$$
hence we write our curve also as
$$ C : y^2 = xf(x^2).$$

Weng~\cite{weng-g3} gave a construction for CM curves
of this latter Weierstrass form.
For example, she constructed the curve 
$$y^2 = u^7 + 6u^5 + 9u^3 + u,$$
which has complex multiplication by
$\QQ(\zeta_9)^+(i)$, as
was later proven by Costa-Mascot-Sijsling-Voight~\cite{cmsv, cmsv-software}.
We get the following result.

\begin{theorem}\label{thm:M8}
For all prime numbers $p\equiv 3\ \mathrm{mod}\ 4$
except $p=3$,
the curve
$$y^2 = xf(x^2)\quad\mbox{or equivalently}\quad
y^4 = uf(u)^2,$$
with
$$f(u) = u^3 + 6u^2 + 9u + 1$$
is a smooth supersingular curve of genus $3$ defined over $\FF_p$ 
in the family $\M{8}$.
For the prime $p=3$, the curve
with $f(u) = u^3 + 7u^2 + 14u + 7$
has the same property.
\end{theorem}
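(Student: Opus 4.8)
The plan is to realize the Jacobian of this curve as the reduction of a cyclic CM abelian threefold and then to invoke Lemma~\ref{lem:insteadofkllrss}. As the preceding discussion shows, the curve $y^2 = xf(x^2)$ with $f(u) = u^3 + 6u^2 + 9u + 1$ is Weng's curve $y^2 = u^7 + 6u^5 + 9u^3 + u$, which has complex multiplication by $K = \QQ(\zeta_9)^+(i)$. First I would record that $K$ is a cyclic CM field of degree $6$: it is the compositum of the cyclic cubic field $\QQ(\zeta_9)^+$ with the imaginary quadratic field $\QQ(i)$, so $\mathrm{Gal}(K/\QQ) \cong \ZZ/3\ZZ \times \ZZ/2\ZZ \cong \ZZ/6\ZZ$, and $K$ is imaginary since it contains $i$. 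This places us in the setting of Lemma~\ref{lem:insteadofkllrss} with $g=3$, provided the endomorphism ring is the maximal order $\mathcal{O}_K$, which I would confirm using the computations of Costa--Mascot--Sijsling--Voight~\cite{cmsv,cmsv-software}.

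Next I would verify the hypotheses of the lemma for every prime $p \equiv 3 \pmod 4$ with $p \neq 3$. Since $K/\QQ$ ramifies only at $2$ and $3$ (the conductors of $\QQ(i)$ and $\QQ(\zeta_9)^+$ are $4$ and $9$), every such $p$ is unramified in $K$. The order of $\mathrm{Frob}_p$ in $\ZZ/6\ZZ$ is even precisely when its image in $\mathrm{Gal}(\QQ(i)/\QQ)$ is nontrivial, that is, precisely when $p$ is inert in $\QQ(i)$, i.e.\ precisely when $p \equiv 3 \pmod 4$; this is exactly the source of the congruence condition. Good reduction at $p$ follows from separability of $xf(x^2)$ modulo $p$: its roots are $0$ together with the square roots of the roots of $f$, and these stay distinct as long as $p$ is odd and $p \nmid \mathrm{disc}(f) = 81$, both of which hold here. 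Lemma~\ref{lem:insteadofkllrss} then gives that the reduction is supersingular, while smoothness, genus $3$, and membership in $M(8)$ (the model has the shape $Y^4 = F_2 F_3^2$ with $F_2 = UV$) are immediate from good reduction.

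For the remaining prime $p = 3$ the first curve has bad reduction (modulo $3$ the right-hand side degenerates to $x(x^2+1)^3$), so I would switch to the second curve, with $f(u) = u^3 + 7u^2 + 14u + 7$. This polynomial has discriminant $49$, hence generates the cyclic cubic field $\QQ(\zeta_7)^+$, and I would argue---again via Weng's construction, or by a direct endomorphism computation---that the associated curve has CM by the cyclic sextic CM field $K' = \QQ(\zeta_7)^+(i)$. Now $K'/\QQ$ ramifies only at $2$ and $7$, so $3$ is unramified in $K'$, while $\mathrm{Frob}_3$ again has even order because $3 \equiv 3 \pmod 4$; after checking that $xf(x^2)$ is separable modulo $3$, Lemma~\ref{lem:insteadofkllrss} applies and yields supersingularity. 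The main obstacle, in both cases, is establishing that the relevant curve has complex multiplication by the full ring of integers $\mathcal{O}_K$ of the claimed field, which is what licenses the lemma; the unramifiedness, the even-order computation, and the good-reduction check are all routine. For $p=3$ one could alternatively bypass the CM input and verify supersingularity directly by computing the zeta function of the reduced curve over $\FF_3$, $\FF_9$, and $\FF_{27}$ and checking that all Newton slopes equal $\tfrac12$.
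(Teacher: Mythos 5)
For $p\neq 3$ your argument is essentially the paper's: identify the curve with Weng's $y^2=u^7+6u^5+9u^3+u$, invoke the Costa--Mascot--Sijsling--Voight verification that it has CM by the cyclic sextic field $\QQ(\zeta_9)^+(i)$, and apply Lemma~\ref{lem:insteadofkllrss} (the paper phrases this as ``the same as for Theorem~\ref{thm:M6}''), with the congruence $p\equiv 3\pmod 4$ entering exactly as you say, through the even order of $\mathrm{Frob}_p$. The one place you diverge is $p=3$: your primary suggestion is to show that the curve with $f(u)=u^3+7u^2+14u+7$ has CM by $\QQ(\zeta_7)^+(i)$ and run the lemma again, but no rigorous verification of that endomorphism ring exists in the cited literature (the paper only credits \cite{cmsv} with certifying the \emph{first} Weng curve, and its remark that over $\FF_3$ one could equally use $u^3+u^2-u+1$ signals that the characteristic-zero CM structure is not what is being used there). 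As stated, that route has a gap. Your fallback --- computing the zeta function of the reduction over $\FF_3$ (equivalently counting points over $\FF_3$, $\FF_9$, $\FF_{27}$) and checking that all Newton slopes are $\tfrac12$ --- is precisely what the paper does, via SageMath, so the proposal as a whole is sound provided you commit to that direct computation for $p=3$ rather than the unverified CM claim.
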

\begin{proof}
	For $p=3$ this is one curve, so a direct verification
	by counting points proves the result.
	We do this using SageMath~\cite{sage}, see
	the accompanying code~\cite{code}.
	
	For $p\not=3$ the proof is exactly the same
	as for Theorem~\ref{thm:M6}
	(using either \cite[Proposition~4.1]{ateam} or Lemma~\ref{lem:insteadofkllrss}),
	but with the field $\QQ(\zeta_9)^+(i)$.
\end{proof}
\begin{remark}
	The polynomial $u^3 + 7u^2 + 14u + 7$
	again comes from Weng~\cite{weng-g2}.
	Since we only use it over $\FF_3$, we could have equivalently written
	$u^3 + u^2 - u + 1$.
\end{remark}

\section{\texorpdfstring{Supersingular cyclic plane quintics in $\M{16}$}{Supersingular cyclic plane quintics in \M{16}}}

The family $\M{16}$ consists of the smooth plane curves
$y^5 = f(x)$ for a separable polynomial $f$ of
degree $4$ or~$5$.
These curves were
dubbed \emph{cyclic plane quintics} or \emph{CPQ curves}
by Somoza
who constructed
in~\cite[Section~2.3]{Somozathesis} 
the curve
\begin{equation}\label{eq:somozacurve}
	C : y^5 = x^4 - 24 x^3 + 3 x^2 + x
	\end{equation}
and conjectured the following.
\begin{conjecture}\label{conj:somozacurve}
	The Jacobian $J$ of the curve $C$ of \eqref{eq:somozacurve} has
	$\mathrm{End}(J_{\overline{\QQ}}) \cong \mathcal{O}_K$,
	where
	$K =\QQ(\zeta_9)^+(\zeta_5)$.
\end{conjecture}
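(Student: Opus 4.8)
The plan is to prove Conjecture~\ref{conj:somozacurve}, namely that the geometric endomorphism ring of the Jacobian~$J$ of the curve $C : y^5 = x^4 - 24x^3 + 3x^2 + x$ is the full ring of integers $\mathcal{O}_K$ of the CM field $K = \QQ(\zeta_9)^+(\zeta_5)$, a field of degree~$12$ (so $J$ has dimension~$6$, matching $g=6$). The approach has two distinct halves: first establish the embedding $\mathcal{O}_K \hookrightarrow \mathrm{End}(J_{\overline{\QQ}})$ at all, and then show it is actually surjective onto the full endomorphism ring, i.e.\ that the endomorphism ring is not a proper suborder nor a strictly larger ring.

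First I would exhibit the order $\ZZ[\zeta_5]$ directly from the obvious automorphism $(x,y)\mapsto(x,\zeta_5 y)$ of $C$, which induces $\ZZ[\zeta_5]\hookrightarrow\mathrm{End}(J)$ already over $\QQ(\zeta_5)$. The harder part of the embedding is producing the real cyclotomic factor $\QQ(\zeta_9)^+$. Here I would follow Somoza's construction in~\cite{Somozathesis}: the CM type was built so that $J$ is isogenous to an abelian variety with CM by $K$, and one locates the extra endomorphisms either via an explicit correspondence on $C\times C$ or, more practically, by computing a numerical period matrix to high precision and recognizing the endomorphism algebra using the lattice-based algorithms of Costa--Mascot--Sijsling--Voight~\cite{cmsv, cmsv-software}. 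This numerical step typically outputs a candidate endomorphism algebra together with a tangent-space representation of each generator, giving strong heuristic evidence that $\mathrm{End}(J_{\overline{\QQ}})\otimes\QQ \cong K$.

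The main obstacle is rigor: the Costa--Mascot--Sijsling--Voight machinery, as usually run, \emph{certifies} that the endomorphisms it finds genuinely exist (by verifying each candidate against the analytic representation to provable precision and checking it is algebraic), but proving that there are \emph{no further} endomorphisms beyond those found is the subtle direction. To upgrade the containment $K\subseteq\mathrm{End}(J_{\overline{\QQ}})\otimes\QQ$ to equality I would argue that $\dim J = 6 = \tfrac12[K:\QQ]$ forces $K$ to be a maximal commutative subfield, so the endomorphism algebra can be no larger unless $J$ fails to be simple; I would then rule out extra factors by a Frobenius / characteristic-polynomial computation at several primes of good reduction, checking that the reduced zeta functions are irreducible of the expected type and hence that the geometric endomorphism algebra is exactly the CM field $K$ rather than a quaternionic or product algebra. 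Finally, to pin down the \emph{ring} as $\mathcal{O}_K$ rather than a non-maximal order, I would verify that the order generated by the found endomorphisms already equals $\mathcal{O}_K$, e.g.\ by checking the relevant index is coprime to small primes via reduction data and invoking that the lattice $\mathcal{O}_K$ is the unique maximal order containing $\ZZ[\zeta_5]$ and the real generator.

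Realistically this is why the statement remains a \emph{conjecture} in the paper: the numerical and heuristic identification of $K$ is essentially certain, but a fully rigorous proof of maximality of the order and of the absence of extra endomorphisms requires either a completely verified run of the certification algorithm or an independent geometric construction of all the endomorphisms, and I would expect the maximality-of-the-order step to be the genuine bottleneck.
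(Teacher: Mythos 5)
The statement you are trying to prove is stated in the paper as a \emph{conjecture}, and the paper offers no proof of it --- only heuristic evidence (the curve \eqref{eq:somozacurve} was produced by high-precision numerical period-matrix computations, its discriminant is the smooth number $3^{10}$, and its reductions at $p=2,7,13,17$ are supersingular as Theorem~\ref{thm:M16} would predict). Your proposal is, by your own admission in its last paragraph, not a proof either: every step past the automorphism $(x,y)\mapsto(x,\zeta_5 y)$ is a description of what one \emph{would} do, and the two steps that carry all the content --- rigorously certifying that the extra endomorphisms generating $\QQ(\zeta_9)^+$ actually exist on $J$, and certifying that the endomorphism ring is all of $\mathcal{O}_K$ and nothing else --- are precisely the steps that cannot currently be executed. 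The paper is explicit that the full verification by the methods of \cite{cmsv} is ``computationally just out of reach'' for this genus-$6$ Jacobian; your suggestion that the existence-certification half of that machinery is routine and only the completeness/maximality half is the bottleneck is not supported by anything, and is the opposite of conservative: without a certified algebraic correspondence realizing a generator of $\QQ(\zeta_9)^+$, you do not even have the containment $K\hookrightarrow\mathrm{End}^0(J_{\overline\QQ})$, only $\QQ(\zeta_5)$.

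Two further points where the sketch would need repair even granted unlimited computation. First, the claim that $\dim J=6=\tfrac12[K:\QQ]$ ``forces $K$ to be a maximal commutative subfield, so the endomorphism algebra can be no larger unless $J$ fails to be simple'' presupposes the containment $K\subseteq\mathrm{End}^0(J_{\overline\QQ})$ that you have not yet established; and ruling out non-simplicity by ``checking that the reduced zeta functions are irreducible'' at a few primes shows simplicity of the reductions, which does imply simplicity of $J$, but then you must still separately show the endomorphism algebra is not a larger division algebra containing $K$ --- in characteristic zero this does follow once $K$ has degree $2g$ and $J$ is simple, but that chain of implications should be spelled out rather than gestured at. Second, your final step conflates the two directions of the order computation: uniqueness of the maximal order tells you $\mathrm{End}(J_{\overline\QQ})\subseteq\mathcal{O}_K$, whereas the conjecture asserts equality, i.e.\ that the endomorphism ring is not a proper suborder of $\mathcal{O}_K$; ``checking the relevant index is coprime to small primes via reduction data'' is not an argument until you say which index, computed from what, and why only small primes can occur. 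In short: there is no proof here to compare with the paper's, because neither you nor the paper has one; the honest conclusion, which the paper draws and you nearly draw, is that the statement must remain a conjecture and be carried as a hypothesis in Theorem~\ref{thm:M16}.
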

\begin{remark}
I have no doubt about the validity of the conjecture,
though rigourous verification seems to be computationally
just out of reach using the methods of~\cite{cmsv},
see~\cite{costaticket}.

We have the following evidence for the conjecture. First of all,
it is stated in \cite[Table~4.2]{Somozathesis} that there exists
a curve
of the form $y^2 = f(x)$ with $f(x) \in \QQ[x]$
such that its Jacobian has CM over $\overline{\QQ}$ by~$K$.
The method for computing this curve is high-precision
numerical approximation, hence $C$
is numerically extremely close to a curve that does have the
correct~CM.

Next, the curve $C$ that Somoza obtained in this way has
discriminant $3^{10}$. Such a smooth number seems unlikely
to appear by chance.

Finally, we verified that $C$ is supersingular modulo $2$, $7$,
$13$, and $17$, consistently with Theorem~\ref{thm:M16} below
(see~\cite{code}).
This again seems unlikely to be by chance.
\end{remark}

\begin{theorem}\label{thm:M16}
	Assume Conjecture~\ref{conj:somozacurve}.
For all prime numbers $p\equiv 2$, $3$, or $4\pmod{5}$ except $p=3$
the curve over $\FF_p$ given by \eqref{eq:somozacurve}
is supersingular.

	Over $\FF_3$, the curve given by $y^5 = x^4 - 7x^2 + 7x$ is supersingular.
\end{theorem}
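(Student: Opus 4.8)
The plan is to follow the strategy of the proofs of Theorems~\ref{thm:M6} and~\ref{thm:M8}, applying Lemma~\ref{lem:insteadofkllrss} to the Jacobian $J$ of $C$. Granting Conjecture~\ref{conj:somozacurve} we have $\mathrm{End}(J_{\overline{\QQ}}) \cong \mathcal{O}_K$ with $K = \QQ(\zeta_9)^+(\zeta_5)$, so the first task is to check that $K$ is a cyclic CM field of degree $2g = 12$. The field $\QQ(\zeta_9)^+$ is totally real and cyclic of degree $3$, while $\QQ(\zeta_5)$ is a CM field, cyclic of degree $4$; since $\gcd(3,4)=1$ these are linearly disjoint over $\QQ$, so $K$ is a CM field with $\mathrm{Gal}(K/\QQ) \cong \mathrm{Gal}(\QQ(\zeta_9)^+/\QQ)\times\mathrm{Gal}(\QQ(\zeta_5)/\QQ) \cong \ZZ/3\ZZ \times \ZZ/4\ZZ \cong \ZZ/12\ZZ$, cyclic of degree $12 = 2g$. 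As $\QQ(\zeta_9)^+$ is unramified outside $3$ and $\QQ(\zeta_5)$ is unramified outside $5$, every prime $p\notin\{3,5\}$ is unramified in $K$.

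Next I would translate the congruence condition into the even-order hypothesis of the lemma, exactly as the remark preceding Lemma~\ref{lem:insteadofkllrss} does for $M(6)$. Under the isomorphism above, the restriction of $\mathrm{Frob}_p$ to $\QQ(\zeta_5)$ is the class of $p$ in $(\ZZ/5\ZZ)^\times \cong \ZZ/4\ZZ$, and the order of $\mathrm{Frob}_p$ is the least common multiple of the orders of its two components. Since the component in $\mathrm{Gal}(\QQ(\zeta_9)^+/\QQ)$ has odd order (dividing $3$), this lcm is even if and only if the $\QQ(\zeta_5)$-component is nontrivial, that is, if and only if $p\not\equiv 1\pmod 5$. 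For $p\neq 5$ this is precisely the condition $p\equiv 2,3$, or $4\pmod 5$.

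With these two points in place, the case $p\neq 3$ is immediate: the congruence forces $p\neq 5$, the curve $C$ has good reduction away from $3$ (its discriminant is $3^{10}$), the prime $p$ is unramified in $K$, and $\mathrm{Frob}_p$ has even order. Applying Lemma~\ref{lem:insteadofkllrss} to $J$ over a number field $k$ over which all its endomorphisms are defined, so that $\mathrm{End}(J_k)\cong\mathcal{O}_K$, and to a prime above $p$, shows that the reduction is supersingular. Because supersingularity is a geometric property, it is unaffected by passing between the residue field at a prime above $p$ and $\FF_p$ itself, so the $\FF_p$-model~\eqref{eq:somozacurve} is supersingular. For the remaining prime $p=3$, which is ramified in $K$ and where the given model degenerates, I would argue as in the $p=3$ case of Theorem~\ref{thm:M8}: the curve $y^5 = x^4 - 7x^2 + 7x$ over $\FF_3$ is a single explicit curve, so I would determine the characteristic polynomial of Frobenius by counting its points over $\FF_{3^k}$ for $1\le k\le 6$ in SageMath and verify that the associated Newton polygon is the straight segment from $(0,0)$ to $(12,6)$ of slope $\tfrac12$, confirming supersingularity.

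The main obstacle is not in the abelian-variety argument, which reduces cleanly to Lemma~\ref{lem:insteadofkllrss} once the Galois-theoretic bookkeeping is in hand, but rather in its two inputs: the theorem is only conditional because it rests on Conjecture~\ref{conj:somozacurve} for the identification $\mathrm{End}(J_{\overline{\QQ}})\cong\mathcal{O}_K$, and the exceptional prime $p=3$ must be settled by a genus-$6$ point count that pins down all twelve Frobenius eigenvalues rather than by the CM argument.
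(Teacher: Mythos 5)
Your proposal is correct and follows essentially the same route as the paper: handle $p=3$ by a direct point count of the separate model, and for the remaining primes combine Conjecture~\ref{conj:somozacurve} with Lemma~\ref{lem:insteadofkllrss}, using that $p\equiv 2,3,4\pmod 5$ forces the Frobenius in $\mathrm{Gal}(K/\QQ)$ to have even order. You simply spell out more of the Galois-theoretic bookkeeping (cyclicity of $K/\QQ$ of degree $12$ and the lcm argument for the order of $\mathrm{Frob}_p$) than the paper does.
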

\begin{proof}
	Again we check the curve for $p = 3$ separately.
	For the other primes, the equation \eqref{eq:somozacurve}
	has good reduction.
	Assume Conjecture~\ref{conj:somozacurve}
	and apply Lemma~\ref{lem:insteadofkllrss}.

	The primes $p\not=3$ that are $2$, $3$, or $4\pmod{5}$
	are unramified in $K/\QQ$ and have order $2$ or $4$ in $(\ZZ/5\ZZ)^\times$.
	In particular, the corresponding Frobenius automorphism
	of $\QQ(\zeta_5)$ has even order, and so does that of~$K$.
	Therefore, Lemma~\ref{lem:insteadofkllrss} gives the result.
\end{proof}

\begin{remark}
	In this case, Proposition~4.1 of \cite{ateam} does not apply directly,
	as it is stated only for abelian threefolds.
	The method of proof applies to more general abelian varieties.
	We condensed it to what we needed and then generalised this,
	which resulted in Lemma~\ref{lem:insteadofkllrss}.
\end{remark}

\section{Final notes}

\subsection{Other families}

We restricted to the three families $\M{6}$, $\M{8}$, and $\M{16}$
because the literature
contains algorithms for reconstructing curves in these families from their period matrices
(and in fact even contains the required example curves).
Completely new reconstruction algorithms are
far beyond the scope of this paper and
are in fact a project in development by the author.

\subsection{Databases of curves, and verification}

The curves that we needed came from three different sources in the literature.
More importantly, the proof that they were correct came from yet two other sources
or (in the case of $\M{16}$) does not even exist yet.
A database with CM curves and certificates of their Jacobians' endomorphisms
would be very useful.

\bibliographystyle{amsplain}
	\bibliography{supersingular}

\end{document}